\numberwithin{equation}{section}
\newtheorem{theorem}{Theorem}[section]
\newtheorem{proposition}[theorem]{Proposition}
\newtheorem{lemma}[theorem]{Lemma}
\newtheorem{Remark}[theorem]{Remark}
\newcommand\dstyle\displaystyle
\newcommand\ma{\medskipamount}
\newcommand\mLP{\\[\ma]}
\newcommand\al\alpha
\newcommand\be\beta
\newcommand\de\delta
\newcommand\la\lambda
\newcommand\tha\theta
\newcommand\iy\infty
\newcommand\bma{\begin{pmatrix}}
\newcommand\ema{\end{pmatrix}}
\begin{document}

\title{Supersymmetric Quantum mechanics \\ on the radial lines  }
\author{  F.  Bouzeffour and M. Garayev \\ Department of mathematics, College of Sciences.\\
 King Saud University, P. O Box $2455$ Riyadh $11451$, Saud Arabia. \\
fbouzaffour@ksu.edu.sa, mgarayev@ksu.edu.sa}
\maketitle
\begin{abstract}We investigate a type of Hermite orthogonal polynomials on $r$ lines in the plane which have a common point at the origin and endpoints at the $r$ roots of unity and we show that their  related Hermite functions are
 eigenfunctions of a differential-difference operator. A supersymmetric harmonic
oscillator on $r$ radial lines is presented and analyzed. Its eigenfunctions are given in terms of these polynomials.
\end{abstract}
\section{Introduction}
In \cite{Zhedanov,P1,P2}, the authors  formulated a supersymmetry quantum mechanics for one-dimensional systems by using difference-differential operators  known in the literature as Dunkl operators \cite{19,18}.  One of its characteristic features is that both a supersymmetric Hamiltonian and a supercharge component involve reflection operators. In addition their related wave functions are expressed in terms of Hermite orthogonal polynomials $H_n(x)$, which are orthogonal polynomials over the real line $\mathbb{R}$ with respect to the weight function $w(x) = e^{-x^2}$, so that
\begin{equation}
\int_{-\infty}^\infty H_n(x)x^ke^{-x^2}\,dx=0,\quad k=0,\,\dots,\,n-1.
\end{equation}  An ordinary supersymmetric quantum-mechanical system may be generated by three operators $Q,$ $Q^\dag$ and $H$ satisfying \cite{1,P2}
\begin{equation}
H=QQ^\dag+Q^\dag Q \quad \mbox{and}\quad Q^2=Q^{\dagger^2}=0.
\end{equation}
The basic ingredient of the supersymmtric quantum mechanics  \cite{} is the $Z_2$ grading operator $\mathrm{\Gamma},$ $\mathrm{\Gamma}^2=1$, which classifies all the operators into even (bosonic, $b$)
and odd (fermonic,$f$) subsets accordingly to the relations $$[\mathrm{\Gamma},b]
=\{\mathrm{\Gamma},f\}=0.$$A realization of this algebra is formulated by taking the following supercharge
$$Q=\frac{1}{\sqrt{2}}(-id/dx-iW)f$$
where $W=W(x)$ is a superpotential and $f,$ $f^\dagger$ are fermionic annihilation and creation operators satisfying
\begin{equation*} f^2=(f^\dagger)^2=0, \qquad \{ f, f^\dagger\}=1, \end{equation*}
and represented by the $2\times 2$ matrices:
\begin{equation*}
   f=
  \left[ {\begin{array}{cc}
   0 & 1 \\
   0 & 0 \\
  \end{array} } \right]
  \qquad f^\dagger=
  \left[ {\begin{array}{cc}
   0 & 0 \\
   1 & 0 \\
  \end{array} } \right]
.\end{equation*}
The supersymmetric Hamiltonian is given by
\begin{equation*}\label{25} H=\{ Q, Q^\dagger\}=\frac{1}{2}(\frac{d^2}{dx^2}+W^2)+\frac{1}{2}\frac{dW}{dx}\sigma_3\end{equation*}
where
\begin{equation*} \sigma_3=[f, f^\dagger]=
  \left[ {\begin{array}{cc}
   1 & 0 \\
   0 & -1 \\
  \end{array} } \right].\end{equation*}
Another realization of supersymmetric quantum mechanics, called minimally bosonized supersymmetric quantum (  it does not
involve the presence of "spin-like" degrees of freedom)  is to take the reflection operator $ (R\psi)(x)=\psi(-x)$, as a grading operator. With the supercharges $Q$ and $Q^\dagger$ having the expression
\begin{equation}
Q=\frac{1}{\sqrt{2}}(\frac{d}{dx}+U(x))R+\frac{1}{\sqrt{2}}V(x)
\end{equation}
where $U(x)$ is an even function and $V(x)$ is an odd function.
The following Hamiltonian $H$, given by
\begin{align}
&H=-\frac{1}{2}\frac{d^2}{dx^2}+\frac{1}{2}(U^2+V^2)+\frac{1}{2}\frac{dU}{dx}-\frac{dV}{dx}R\label{hebel}
\end{align}
In \cite{Zhedanov}, the authors considered  a simple example of supersymmetric quantum mechanics given by the following Hamiltonian:
 \begin{equation}
H=Q^2=-\frac{1}{2}\frac{d^2}{dx^2}+\frac{1}{2}\,x^2-\frac{1}{2}R
\end{equation}
where $R$ is the reflection operator acting as: $ (R\psi)(x)=\psi(-x),$ and  the supercharge $Q$ is given by
\begin{equation}
Q=\frac{1}{\sqrt{2}}(\frac{d}{dx}R+x)\label{char1}
\end{equation}
 \begin{figure}[ht]
\unitlength=0.6mm
\centering
\begin{picture}(100,100)(0,0)
\put(50,50){\circle*{3}}
\put(100,50){\circle*{3}}
\put(65.45,97.55){\circle*{3}}
\put(9.55,79.39){\circle*{3}}
\put(9.55,20.61){\circle*{3}}
\put(65.45,2.45){\circle*{3}}
\put(54,53){$0$}
\put(104,53){$1$}
\put(70,100){$\omega_r$}
\put(5,84){$\omega_r^2$}
\put(5,12){$\omega_r^3$}
\put(70,0){$\omega_r^4$}
\put(50,50){\qbezier(-50,0)(25,0)(60,0)}
\put(50,50){\qbezier(-15,-47,55)(7.5,23.775)(15,47.55)}
\put(50,50){\qbezier(-15,47.55)(7.6,-23.775)(15,-47.55)}
\put(50,50){\qbezier(40.45,-29.39)(-20.225,14.695)(-40.45,29.39)}
\put(50,50){\qbezier(40.45,29.39)(-20.225,-14.695)(-40.45,-29.39)}
\end{picture}
\caption{ $r=5$ with $\omega_r=e^{2\pi i/5}$.}
\label{fig:rstar}
\end{figure}
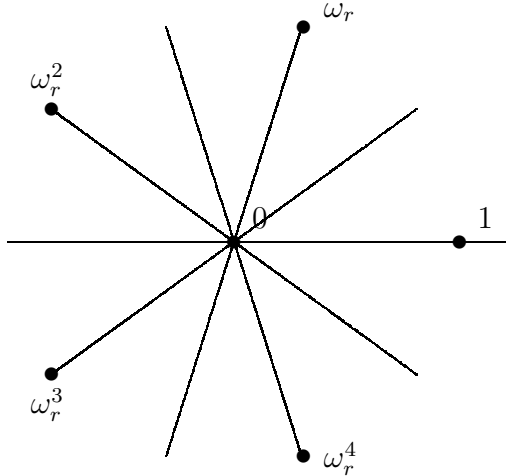
In this paper we will extend the Hermite polynomials to polynomials on $r$ lines. We consider a special configuration for the $r$ lines by having one common point $0$ and passing through the $r$ roots of unity $\omega_r^{j}$,  $j=0,\,\dots,\,r-1$ with $\omega_r=e^{\frac{2i\pi}{r}}$. see Figure 1.\\
To preserve the symmetry, we take a weight function $w(x)=|x|^{2\nu}e^{-x^{2r}}$ and a
measure $\mu_j$, which is supported on the line $\delta_j=\omega^j_r \mathbb{R}$, $j = 0,\,\dots,\, r-1$ with $ w(x)$ as its Radon-Nikodym derivative.  The orthogonality relations for the type Hermite polynomials $H_N^{(r,\nu)}(x)$ on the radial lines
are then given by \begin{equation}
\sum_{j=0}^{r-1}\omega_r^{-j}\int_{\omega^{j}_r\mathbb{R}}x^k\overline{H^{(r,\nu)}_N(x)} |x|^{2\nu}e^{-x^{2r}}dx=0,\quad k=0,\,\dots, N-1.\label{pr1}
\end{equation}
In particular when $r=1$ and $\nu=0$, we have the standard case  of the  Hermite polynomials $H_n$ defined in \eqref{hermout1}. It is interesting to know what kind of supersymmetric Hamiltonian involving reflection operators admits exact eigenfunctions which are expressible in terms of the Hermite type  orthogonal polynomial on the radial lines.
In the literature we found various type of polynomials which are orthogonal polynomials on  radial lines in the complex plane, see for instance \cite{M1}. In particular, Milovanovi\'{c}  studied  a generalized Hermite polynomials related to the following inner product \cite{M}  \begin{equation*}
\left\langle f,g\right\rangle=
\sum_{j=0}^{2r-1}\int_{0}^\infty f(\varepsilon_r^{j}x)\overline{g(\varepsilon_r^{j}x)} |x|^{2r\alpha}e^{-x^{2r}}dx,\, \varepsilon_r=e^{\frac{i\pi}{r}}.
\end{equation*}
We study the particular when $r=1$ in section 2 and in section 3  we investigate the generalized Hermite polynomials on the radial lines. In section 4 we
give a new Dunkl type operator, which intrinsically connect with the earlier sections.
Using this operator a supersymmetric harmonic oscillator on $r$ radial lines is presented and analyzed. Its eigenfunctions are given in terms of the Hermite polynomials on the radial lines.
\section{Generalized Hermite polynomials on the real line}
Recall that the $R$-deformed Heisenberg algebra is generated by  $a$, $a^\dagger$ and $R$ 
\begin{equation}
[a,a^\dagger]=1+2\nu R,\quad \{R,a\}=\{R,a^\dagger\}=0,\quad R^2=1.
\end{equation}
which possesses unitary infinite-dimensional representations for $\nu > -1$
acts as follows on the sates $|n,\nu\rangle$
\begin{align}
\begin{cases}
&a|n,\nu\rangle=\sqrt{n+\,\theta_n}|n-1,\nu\rangle,\\
&a^\dagger|n,\nu\rangle=\sqrt{n+1+\,\theta_{n+1}}|n+1,\nu\rangle,\\
&R|n,\nu\rangle=(-1)^n|n,\nu\rangle,
\end{cases}
\end{align}
where
\begin{align}\label{tt}
 \theta_n = \begin{cases}0 & \mbox{ if } n \mbox{ is even}\\ 2\nu&\mbox{ if } n \mbox{ is odd},\end{cases}
\end{align}
A realization of this algebra is given by the operators
\begin{equation}\label{H4}
a=\frac{1}{\sqrt{2}}(x+D_\nu),\quad a^\dagger=\frac{1}{\sqrt{2}}(x-D_\nu).
\end{equation}
where $D_\nu$ is the  Yang-Dunkl operator
 \begin{equation}
D_\nu=\frac{d}{dx}+\frac{\nu}{x}(1-R).
\end{equation}
The bosonic oscillator Hamiltonian associated to the $R$-deformed Heisenberg algebra is given by 
\begin{equation}
H_0=-\frac{1}{2}\frac{d^2}{dx^2}-\frac{\nu}x\,\frac{d}{dx}\,+\frac{\nu}{2x^2}
(1-R)+\frac{1}{2}x^2.
\end{equation}
The wave function corresponding to the well-known eigenvalue
\begin{equation}
\lambda_n=n+\nu+\frac{1}{2},\quad n=0,\,1,\,2,\dots
\end{equation}
are given by
\begin{equation}
\psi_n(x)=(-1)^{[n/2]}2^n([n/2]!\Gamma([(n+1)/2]+\nu+1/2) )^{-1/2}e^{-x^2/2}H^{(\nu)}_n(x),
\end{equation}
where $H^{(\nu)}_n(x)$ is the generalized Hermite polynomials. It is well known that for $\nu>-\frac{1}{2},$ these polynomial can be expressed in terms of the
Laguerre polynomial  $L_n^\nu(x)$
 \begin{equation} \begin{cases}
     H_{2n}^{(\nu)}(x) = (-1)^n 2^{2n} n! \,L_n^{\nu-\frac{1}{2}}(x^2),\\
     H_{2n+1}^{(\nu)}(x) =(-1)^n 2^{2n+1} n! \,xL_n^{\nu+\frac{1}{2}}(x^2).
   \end{cases}.\label{hermout1}\end{equation}
They satisfy the orthogonality relations
:\begin{equation}
\int_{\mathbb{R}}H^{(\nu)}_n(x)
H^{(\nu)}_m(x)|x|^{2\nu}e^{-x^2}\,dx=\gamma_n^{-1}\delta_{n\,m},\label{orth}
\end{equation}
where \begin{equation}\gamma_n^{-1}=2^{2n}
\Gamma([\frac{n}{2}]+1)\Gamma([\frac{n+1}{2}]+
\nu+\frac{1}{2}),\,\,n=0,\,\dots\label{constant3} \end{equation} and $[x]$
denotes the greatest integer function.\\We introduce the following supercharge operator $Q$
\begin{equation}
Q=\frac{1}{\sqrt{2}}\big(D_\nu R+x\big)
\end{equation}
After evaluating $Q^2$, we get the following form for a supersymmetric Hamiltonian
\begin{equation}
H=Q^2=-\frac{1}{2}\frac{d^2}{dx^2}-\frac{\nu}x\,\frac{d}{dx}\,+\frac{\nu}{2x^2}
(1-R)+\frac{1}{2}x^2-\frac{1}{2}R-\nu.
\end{equation}
The spectrum of $H$ is easily obtained by observing that
\begin{equation}
H=H_0-\frac{1}{2}R-\nu.
\end{equation}
It follows
\begin{equation}
H|n,\nu\rangle=(n+\frac{1}{2}(1-(-1)^n)|n,\nu\rangle.
\end{equation}
The spectrum of $H$  consist only for the even number  starting with zero. Each level is degenerate except for the ground states which is unique.

\section{Hermite orthogonal polynomials on the radial lines}
Let $r$ be a fixed odd integer $r\geq 2,$ and $\omega_r$ be a $r$th  primitive root of unity, i.e, $\omega_r=e^{\frac{2i\pi}{r}}.$ 
In this section we investigate orthogonal polynomials relative to the following inner product
\begin{align}  \label{e10}
(f,g)_\nu&=\sum_{j=0}^{r-1}\omega_r^{-j}\int_{\delta_j }f(x)\overline{g(x)} |x|^{2\nu}e^{-x^{2r}}dx,\\&=\sum_{j=0}^{r-1}\int_{\mathbb{R} }f(\omega_r^{j}x)\overline{g(\omega_r^{j}x)} |x|^{2\nu}e^{-x^{2r}}dx.
\end{align}
Observe that
\begin{equation*}
\left\langle f,f\right\rangle _{\nu}=
\sum_{j=0}^{r-1}\int_{\mathbb{R}}|f(\omega_r^jx)|^2
|x|^{2\nu}e^{-x^{2r}}\,dx>0,
\end{equation*}
except when $f=0,$ then \eqref{e10} define an inner product.
Following the steps of \cite{M,M1}, we can prove the existence of orthogonal polynomials $H^{(r,\nu)}_N(x)$ such that
\begin{equation}
\sum_{j=0}^{r-1}\int_{\mathbb{R} }(\omega_r^jx)^k\overline{H^{(r,\nu)}_N(\omega_r^jx)} |x|^{2\nu}e^{-x^{2r}}dx=0,\quad k=0,\,\dots, N-1.\label{pr1}
\end{equation}
We normalize these polynomials so that the coefficient of $x^N$ in $H^{(r,\nu)}_N(x)$
 is $2^{[\frac{N}{r}]}.$
\begin{proposition}We have
\begin{equation}
H^{(r,\nu)}_N(\omega_r x)=\omega_r^{N}H^{(r,\nu)}_N(x)
\end{equation}
\end{proposition}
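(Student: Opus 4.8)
The plan is to deduce the identity from the fact that, up to a scalar, there is only one orthogonal polynomial of each degree for the inner product $(\cdot,\cdot)_\nu$. The first ingredient is a symmetry of that inner product: since $|\omega_r|=1$ and $\omega_r^{2r}=1$, the weight $|x|^{2\nu}e^{-x^{2r}}$ is invariant under $x\mapsto\omega_r x$, so reindexing $j\mapsto j+1 \pmod r$ in \eqref{e10} yields
$\big(f(\omega_r\,\cdot\,),g(\omega_r\,\cdot\,)\big)_\nu=(f,g)_\nu$ for all polynomials $f,g$ (the boundary term $j=r-1$ wraps to $j=0$ precisely because $\omega_r^{r}=1$). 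Applying this with $f$ replaced by $x\mapsto f(\omega_r^{-1}x)$ and $g$ unchanged gives the "shift" identity $\big(f,g(\omega_r\,\cdot\,)\big)_\nu=\big(f(\omega_r^{-1}\,\cdot\,),g\big)_\nu$.

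Next I would set $Q(x):=H^{(r,\nu)}_N(\omega_r x)$, which is a polynomial of degree $N$, and check that it obeys the defining orthogonality relations \eqref{pr1}. For $0\le k\le N-1$ the shift identity, together with $(x^k,H^{(r,\nu)}_N)_\nu=0$, gives
$(x^k,Q)_\nu=\big((\omega_r^{-1}x)^k,H^{(r,\nu)}_N\big)_\nu=\omega_r^{-k}(x^k,H^{(r,\nu)}_N)_\nu=0$; equivalently one simply substitutes $Q(\omega_r^j x)=H^{(r,\nu)}_N(\omega_r^{j+1}x)$ into \eqref{pr1}, reindexes the sum by $j\mapsto j+1\pmod r$, and factors out the scalar $\omega_r^{-k}$. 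Either way, $Q$ is orthogonal to every polynomial of degree $<N$.

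Then I would invoke uniqueness. Because $(\cdot,\cdot)_\nu$ is a genuine positive-definite inner product on $\mathbb{C}[x]$ (this is exactly the positivity recorded just before \eqref{e10}), the usual Gram–Schmidt argument shows the orthogonal polynomial of degree $N$ is unique up to a scalar: if $P,\tilde P$ both have degree $N$ and are orthogonal to all lower-degree polynomials, choose the constant $c$ matching their leading coefficients, so that $P-c\tilde P$ has degree $<N$ and is orthogonal to itself, hence vanishes. I then apply this to $Q(x)=H^{(r,\nu)}_N(\omega_r x)$ and to $\omega_r^{N}H^{(r,\nu)}_N(x)$: both are orthogonal to all polynomials of degree $<N$, and, using the normalization that the coefficient of $x^N$ in $H^{(r,\nu)}_N$ is $2^{[\frac{N}{r}]}$, both have leading coefficient $2^{[\frac{N}{r}]}\omega_r^{N}$. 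Hence the scalar is $1$ and $H^{(r,\nu)}_N(\omega_r x)=\omega_r^{N}H^{(r,\nu)}_N(x)$.

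There is no serious obstacle here; the only points needing care are the bookkeeping in the index shift $j\mapsto j+1\pmod r$ and making sure the two candidate polynomials are normalized \emph{identically}, so that the uniqueness constant comes out exactly $1$ and not merely some unimodular number. If one prefers a more direct route, note that the claimed identity is equivalent to saying that $H^{(r,\nu)}_N$ is a linear combination of the monomials $x^m$ with $m\equiv N\pmod r$; this can be read off from the Gram–Schmidt construction of the $H^{(r,\nu)}_N$ applied to $1,x,x^2,\dots$, because a short computation shows $(x^a,x^b)_\nu=\big(\sum_{j=0}^{r-1}\omega_r^{j(a-b)}\big)\int_{\mathbb{R}}x^{a+b}|x|^{2\nu}e^{-x^{2r}}\,dx$, which vanishes unless $a\equiv b\pmod r$, so the Gram matrix decouples into blocks indexed by residues mod $r$.
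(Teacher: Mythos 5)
Your proof is correct and follows essentially the same route as the paper: substitute $\omega_r x$, reindex the sum $j\mapsto j+1 \pmod r$ to show $H^{(r,\nu)}_N(\omega_r\,\cdot)$ is orthogonal to all polynomials of degree less than $N$, then invoke uniqueness of the orthogonal polynomial with the given normalization. Your write-up is actually a bit more careful than the paper's, since you make explicit both the positive-definiteness/uniqueness argument and the matching of leading coefficients that pins the scalar to exactly $\omega_r^{N}$.
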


\begin{proof}Let $q(x)$ is an arbitrary polynomial of degree at most $N-1,$ we have
\begin{align*}
(q(x),H_N^{(r,\nu)}(\omega_rx))_\nu&=\sum_{j=0}^{r-1}\int_{\mathbb{R} }q(\omega_r^jx)\overline{H^{(r,\nu)}_N(\omega_r^{1+j}x)} |x|^{2\nu}e^{-x^{2r}}dx,\\&
=\sum_{j=0}^{r-1}\int_{\mathbb{R} }q(\varepsilon^j_rx)\overline{H^{(r,\nu)}_N(\omega_r^{1+j}x)} |x|^{\alpha}e^{-x^{2r}}dx\\&
=\sum_{j=1}^{r}\int_{\mathbb{R} }q(\varepsilon^{2j-2}_rx)\overline{H^{(r,\nu)}_N(\omega_r^{j}x)} |x|^{2\nu}e^{-x^{2r}}dx\\&
=\sum_{j=0}^{r-1}\int_{\mathbb{R} }q(\omega_r^{1-j}x)\overline{H^{(r,\nu)}_N(\omega_r^jx)} |x|^{2\nu}e^{-x^{2r}}dx\\&
=0.
\end{align*}
Since the degree of $q(x)$ is less than $N-1$ , then  $\omega_r^{-N}H^{(r,\nu)}_N(\omega_rx)$
is an orthogonal polynomial with respect to \eqref{pr1} . Finally, from the uniqueness of $H^{(r,\nu)}_N(x)$ it follows that
\begin{equation}
H^{(r,\nu)}_N(\omega_r x)=\omega_r^{N}H^{(r,\nu)}_N(x).
\end{equation}
\end{proof}

\begin{theorem}The  Hermite type polynomials $\{H^{(r,\nu)}_N(x)\}$ satisfy the following orthogonality relations
\begin{equation}  \label{e9}
\sum_{j=0}^{r-1}\int_{\mathbb{R}}H^{(r,\nu)}_N(x)
\overline{H^{(r,\nu)}_M(x)} |x|^{2\nu}e^{-x^{2r}}dx=\zeta_N\delta_{NM},
\end{equation}
where \begin{equation}\zeta_N=2^{[N/r]} \Gamma([\frac{N}{2r}]+1)\Gamma([\frac{N+r}{2r}]+\frac{2\nu+2+2s-r}{2r}).\label{z1}
\end{equation}Furthermore, the polynomials $\{H^{(r,\nu)}_N(x)\}$ can be expressed in terms of the generalized Hermite polynomial
\begin{equation}
H_N^{(r,\nu)}(x)=x^sH_n^{(\nu_s)}(x^r),\quad N=nr+s
\end{equation}
where $ s=0,\dots,\,r-1\,\,$ and
\begin{equation}
\nu_s=\frac{2\nu+2s+1-r}{2r}.
\end{equation}
\end{theorem}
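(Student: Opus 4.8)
The plan is to verify directly that $x^{s}H^{(\nu_s)}_n(x^r)$, with $N=nr+s$ and $0\le s\le r-1$, satisfies the conditions \eqref{pr1} that characterize $H^{(r,\nu)}_N$, and then to obtain $\zeta_N$ from the known norm \eqref{orth}--\eqref{constant3} of the generalized Hermite polynomials by the same change of variables.

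First, by the Proposition the identity $H^{(r,\nu)}_N(\omega_r x)=\omega_r^{N}H^{(r,\nu)}_N(x)$ shows that only the monomials $x^{m}$ with $m\equiv N\pmod r$ occur in $H^{(r,\nu)}_N$; writing $N=nr+s$ we therefore have $H^{(r,\nu)}_N(x)=x^{s}P(x^r)$ for a polynomial $P$ of degree $n$, and the chosen normalization of $H^{(r,\nu)}_N$ forces the leading coefficient of $P$ to be $2^{[N/r]}=2^{n}$. Next I would insert this into \eqref{pr1}: since $(\omega_r^jx)^k=\omega_r^{jk}x^k$ and, by the monomial structure just found, $\overline{H^{(r,\nu)}_N(\omega_r^jx)}=\omega_r^{-jN}\overline{H^{(r,\nu)}_N(x)}$ for real $x$, the summation index $j$ enters \eqref{pr1} only through $\sum_{j=0}^{r-1}\omega_r^{j(k-N)}$, which equals $r$ when $r\mid k-N$ (equivalently $k\equiv s\pmod r$) and $0$ otherwise. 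Hence \eqref{pr1} is automatic for $k\not\equiv s\pmod r$, and for $k=mr+s$ it reduces to the $n$ conditions
\[
\int_{\mathbb{R}}x^{mr+s}\,\overline{H^{(r,\nu)}_N(x)}\,|x|^{2\nu}e^{-x^{2r}}\,dx=0,\qquad m=0,1,\dots,n-1 .
\]

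Then I would make the substitution $t=x^r$, which is a bijection of $\mathbb{R}$ onto itself precisely because $r$ is odd, with $dx=\tfrac1r|t|^{(1-r)/r}\,dt$ and $x^{r-1}=|x|^{r-1}$. Using $H^{(r,\nu)}_N(x)=x^sP(x^r)$, a direct computation turns $x^{2s}|x|^{2\nu}e^{-x^{2r}}\,dx$ into $\tfrac1r|t|^{2\nu_s}e^{-t^2}\,dt$ with $\nu_s=\frac{2\nu+2s+1-r}{2r}$, and $x^{mr}$ into $t^{m}$, so the displayed conditions become $\int_{\mathbb{R}}t^{m}\,\overline{P(t)}\,|t|^{2\nu_s}e^{-t^2}\,dt=0$ for $m=0,\dots,n-1$. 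Taking complex conjugates (the weight is real) this says exactly that $P$ is orthogonal, with respect to $|t|^{2\nu_s}e^{-t^2}$, to every polynomial of degree $<n$; such a polynomial is unique up to a scalar, and since $H^{(\nu_s)}_n$ has leading coefficient $2^{n}$ by \eqref{hermout1}, comparison of leading coefficients gives $P=H^{(\nu_s)}_n$, i.e. $H^{(r,\nu)}_N(x)=x^{s}H^{(\nu_s)}_n(x^r)$; in particular its coefficients are real.

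Finally, for the norm I would run the same two reductions on $(H^{(r,\nu)}_N,H^{(r,\nu)}_M)_\nu$: the geometric sum in $j$ annihilates all cross terms with $N\not\equiv M\pmod r$ and otherwise contributes a factor $r$, and the substitution $t=x^r$ collapses the remaining integral to $\int_{\mathbb{R}}H^{(\nu_s)}_n(t)H^{(\nu_s)}_m(t)\,|t|^{2\nu_s}e^{-t^2}\,dt=\gamma_n^{-1}\delta_{nm}$ by \eqref{orth} and \eqref{constant3} (with $\nu$ replaced by $\nu_s$). Rewriting $[\tfrac n2]=[\tfrac{N}{2r}]$ and $[\tfrac{n+1}2]=[\tfrac{N+r}{2r}]$ (valid because $0\le s\le r-1$) and inserting the value of $\nu_s$ then yields the stated expression \eqref{z1} for $\zeta_N$. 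I expect the only genuinely delicate point to be the change of variables $t=x^r$: one must track the absolute values and the Jacobian carefully, and it is here that the oddness of $r$ is used twice — for bijectivity of $x\mapsto x^r$ on $\mathbb{R}$ and for the identity $x^{r-1}=|x|^{r-1}$ — while one must also check that the conditions surviving the root-of-unity sum are exactly the $n$ conditions needed to pin down $P$.
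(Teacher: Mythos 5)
Your proposal is correct and follows essentially the same route as the paper: Proposition 3.1 forces $H^{(r,\nu)}_N(x)=x^sQ_n(x^r)$, the root-of-unity sum kills all cross terms with $s\neq t$, and the substitution $t=x^r$ (using that $r$ is odd) reduces everything to the generalized Hermite orthogonality \eqref{orth} with parameter $\nu_s$, which pins down $Q_n=H^{(\nu_s)}_n$ and gives the norm. One cosmetic caveat: carrying out the computation literally gives $\gamma_n^{-1}$ with $\nu$ replaced by $\nu_s$, namely $2^{2[N/r]}\Gamma([\tfrac{N}{2r}]+1)\Gamma([\tfrac{N+r}{2r}]+\tfrac{2\nu+2s+1}{2r})$, which differs from the printed constant \eqref{z1} in the power of $2$ and in the Gamma argument — a discrepancy in the paper's stated formula rather than a flaw in your argument.
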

\begin{proof}From Proposition 3.1, the polynomial $H_N^{(r,\nu)}(x)$ can be written uniquely in the form
\begin{equation}
H_N^{(r,\nu)}(x)=x^sQ_n(x^r),\quad N=nr+s.
\end{equation}
Let $M=mr+t$, $t=0,\,\dots,r-1$, we have
\begin{align}  \label{e8}
(H_N^{(r,\nu)},H_M^{(r,\nu)})_\nu&=\sum_{j=0}^{r-1}\omega_r^{j(s-t)}\int_{\mathbb{R} }Q_n(x^r)\overline{Q_m(x^r)}x^sx^t |x|^{2\nu}e^{-x^{2r}}dx,\\&
=\delta_{st}r\int_{\mathbb{R} }Q_n(x^r)\overline{Q_m(x^r)} |x|^{2\nu+2s}e^{-x^{2r}}dx\\&=\delta_{st}\int_{\mathbb{R} }Q_n(x)\overline{Q_m(x)} |x|^{\frac{2\nu+2s+1-r}{r}}e^{-x^{2}}dx.
\end{align}
The existence of the polynomials $\{H_N^{(r,\nu)}(x)\}$
is reduced to the existence of polynomials orthogonal on $\mathbb{R}$ with
respect to the weight function $|x|^{2\nu_s}e^{-x^{2r}}.$\\ From \eqref{orth} we obtain
\begin{equation}
H_N^{(r,\nu)}=x^sH^{(\nu_s)}_n(x^r),
\end{equation}
where
\begin{equation}
\nu_s=\frac{2\nu+2s+1-r}{2r}, \quad s=0,\dots,\,r-1.
\end{equation}
\end{proof}
\begin{theorem}The Hermite polynomials $\{H_N^{(r,\nu)}(x)\}$ on the radial lines satisfy the three terms recurrence relations
\begin{align}
&2x^r\,H_N^{(r,\nu)}(x)=H_{N+r}^{(r,\nu)}(x)-2\big([N/r]+
\vartheta_N\big)H_{N-r}^{(r,\nu)}(x),N\geq r \label {rec1}\\&
H_N^{(r,\nu)}(x)=x^N,\quad N=0,\,\dots,\,r-1.
\end{align}

\end{theorem}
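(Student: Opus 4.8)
The plan is to reduce the claimed recurrence to the classical three-term recurrence for the generalized Hermite polynomials $H_n^{(\nu)}$, using the factorization already established in the previous theorem. Recall from that theorem that with $N=nr+s$, $0\le s\le r-1$, one has $H_N^{(r,\nu)}(x)=x^s H_n^{(\nu_s)}(x^r)$ with $\nu_s=(2\nu+2s+1-r)/(2r)$; in particular $[N/r]=n$ and multiplication by $x^r$ acts only through the argument $x^r$ of the inner Hermite factor.

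First I would record the three-term recurrence for the generalized Hermite polynomials. Since $\{H_n^{(\nu)}\}_{n\ge 0}$ is the orthogonal polynomial sequence for the even weight $|x|^{2\nu}e^{-x^2}$ on $\RR$ normalized with leading coefficient $2^n$, it satisfies a relation of the form $2x\,H_n^{(\nu)}(x)=H_{n+1}^{(\nu)}(x)+2(n+\theta_n)\,H_{n-1}^{(\nu)}(x)$ with $\theta_n$ as in \eqref{tt}. This is classical; it can be read off from the Laguerre representation \eqref{hermout1} together with the contiguous relations for $L_n^\alpha$, or, staying within the present paper, from the action of $x=\tfrac1{\sqrt2}(a+a^\dagger)$ on the states $|n,\nu\rangle$ in the $R$-deformed Heisenberg algebra of Section~2. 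I would pin down the constant $2(n+\theta_n)$ by checking $H_1^{(\nu)}$ and $H_2^{(\nu)}$ explicitly from \eqref{hermout1}.

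Next, apply this recurrence with parameter $\nu_s$ to $H_n^{(\nu_s)}(y)$, substitute $y=x^r$, and multiply through by $x^s$. The left-hand side becomes $2x^r\cdot x^s H_n^{(\nu_s)}(x^r)=2x^r H_N^{(r,\nu)}(x)$. On the right, since $N\mapsto N\pm r$ changes $n$ to $n\pm 1$ but leaves the residue $s$ (hence the parameter $\nu_s$) fixed, one has $x^s H_{n\pm1}^{(\nu_s)}(x^r)=H_{N\pm r}^{(r,\nu)}(x)$, so the right-hand side collapses to $H_{N+r}^{(r,\nu)}(x)$ plus a multiple of $H_{N-r}^{(r,\nu)}(x)$. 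Comparing with \eqref{rec1} and using $[N/r]=n$ identifies the shift coefficient: $\vartheta_N=\theta_{[N/r]}$ evaluated at $\nu_s$, i.e. $\vartheta_N=0$ when $[N/r]$ is even and $\vartheta_N=2\nu_s$ when $[N/r]$ is odd. The initial data $H_N^{(r,\nu)}(x)=x^N$ for $0\le N\le r-1$ is immediate from the factorization, since then $n=0$, $s=N$, and $H_0^{(\nu_N)}\equiv 1$; and the normalization is consistent, as $x^s H_n^{(\nu_s)}(x^r)$ has leading term $2^n x^N=2^{[N/r]}x^N$.

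The only genuine input is the generalized Hermite recurrence, whose coefficient must be fixed exactly (this is where any normalization or sign subtlety would appear); everything else is index and parity bookkeeping, and the decisive structural point — that the residue $s$ modulo $r$ is preserved under $N\mapsto N\pm r$, so the three-term pattern closes within each residue class — is built into the factorization from the previous theorem. I do not expect any further obstacle.
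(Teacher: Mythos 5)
Your proposal is correct and follows essentially the same route as the paper, which likewise deduces \eqref{rec1} from the classical three-term recurrence $H^{(\nu)}_{n+1}(x)=2xH^{(\nu)}_n(x)-2(n+\theta_n)H^{(\nu)}_{n-1}(x)$ applied with parameter $\nu_s$ at the argument $x^r$, via the factorization $H_N^{(r,\nu)}(x)=x^sH_n^{(\nu_s)}(x^r)$; you simply spell out the index and parity bookkeeping that the paper leaves implicit. Note that your derivation yields a plus sign in front of $2\big([N/r]+\vartheta_N\big)H_{N-r}^{(r,\nu)}(x)$, consistent with the recurrence the paper itself cites, so the minus sign displayed in \eqref{rec1} is evidently a typographical slip in the statement rather than a flaw in your argument.
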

\begin{proof}The recurrence relation \eqref{rec1} follows from the following recurrence relation for the generalized Hermite polynomials $H^{(\nu)}_n(x)$ \cite{Chihara}
\begin{align}
&H^{(\nu)}_{n+1}(x)=2xH^{(\nu)}_n(x)-2(n+\theta_n)H^{(\nu)}_{n-1}(x)\\&
H^{(\nu)}_{-1}(x)=0,\quad H^{(\nu)}_0(x)=1.
\end{align}
where $\theta_n$ is defined in \eqref{tt}

\end{proof}
\section{Dunkl harmonic oscillator on the radial lines}
Let $\mathcal{P}$ be the real vector space of all polynomials in one variable with real coefficients. For each odd integer $r$, we denote by $s_r$ the complex reflection acting on  $f\in \mathcal{P}$  as
$$
(s_rf)(x):=f(\varepsilon_r x), \,\,\varepsilon_r=e^{\frac{i\pi}{r}},
$$
and by  $\Pi_0(r),\,\Pi_1(r),\dots,\,\Pi_{r-1}(r)$  the orthogonal projections related to the complex reflection $s_r$, which are defined by
\begin{equation}
\Pi_{i}(r)=\frac{1}{r}\sum_{j=0}^{r-1}\omega_r^{-ij}s_r^j,\quad i=0,\,\dots,\,r-1.
\end{equation}
They constitute a system of resolution of the identity
\begin{equation}
\Pi_0(r)+\Pi_1(r)+\dots+\Pi_{r-1}(r)=1,\quad \mbox{and}\quad \Pi_i(r)\Pi_j(r)=\delta_{ij}\Pi_i(r) .
\end{equation}
Let $\nu >\frac{r-1}{2}$ and consider the following differential-difference operator
\begin{equation}
Y_\nu=\frac{d}{dx^r}+\frac{1}{rx^r}\sum_{s=0}^{r-1}(2\nu+1+s-r)\Pi_{r+s}(2r)
-s\Pi_s(2r),
\end{equation}
where
\begin{equation}
\frac{d}{dx^r}=\frac{1}{rx^{r-1}}\frac{d}{dx}.
\end{equation}
The operator $Y_\nu$ acts on monomials $x^k$ as
\begin{align*}&Y_\nu x^{s}=0,\\&Y_\nu x^{2nr+s}=2nx^{2(n-1)+r+s}, \\&Y_\nu x^{2nr+r+s}=(2n+2\nu_s+1)x^{2nr+s},
\end{align*}
where $s=0,\,\dots,\,r-1,$ and
\begin{equation}\label{num1}
\nu_s=\frac{2\nu+2s+1-r}{2r}.
\end{equation}
We introduce  the deformed number $[N]_\nu$ for each  $N=0,\,1,\,\dots$ by
\begin{equation}
[N]_\nu=[N/r]+\vartheta_N,\label{num}
\end{equation}
where \begin{equation}
\vartheta_N=\begin{cases}
&\displaystyle 0,\quad \mbox{if}\quad N=2nr+s,\, s=0,\dots, r-1\mLP
&\displaystyle
2\nu_s \quad \mbox{if}\quad N=2nr+r+s, \,\, s=0,\dots, r-1.
\end{cases}
\label{2}
\end{equation}
Obviously
\begin{equation}
Y_\nu x^N=[N]_\nu  x^{N-r}.
\end{equation}
Notice the important property of the operator $Y_\nu$ that is: it sends the linear space of polynomials of degree less than $n $ to the space of dimension $n-r$. In particular, this means that there are no polynomial eigenfunction of this operator.
\begin{proposition}We have
\begin{equation}
Y_\nu H^{(\nu)}_N(x)=2[N]_\nu H^{(\nu)}_{N-r}(x)\label{rec2}.
\end{equation}
\end{proposition}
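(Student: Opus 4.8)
The plan is to deduce the relation from the factorization
$H^{(r,\nu)}_N(x)=x^sH_n^{(\nu_s)}(x^r)$, $N=nr+s$ with $0\le s\le r-1$, obtained in Section~3, together with the classical lowering relation for the ordinary generalized Hermite polynomials,
\[
  D_\mu H_m^{(\mu)}(y)=2\bigl(m+\theta_m\bigr)H_{m-1}^{(\mu)}(y)\qquad(\mu>-\tfrac12),
\]
where $D_\mu$ is the Yang--Dunkl operator with parameter $\mu$ and $\theta_m$ is as in \eqref{tt} with $\nu$ replaced by $\mu$. This last relation is standard; in the present notation it follows from the pointwise identity $(y+D_\mu)\bigl(e^{-y^2/2}f(y)\bigr)=e^{-y^2/2}(D_\mu f)(y)$ together with the action $a\,|m,\mu\rangle=\sqrt{m+\theta_m}\,|m-1,\mu\rangle$ of $a=\tfrac1{\sqrt2}(y+D_\mu)$ on the wave functions, the constant being pinned down by comparing top-degree coefficients, that of $H_m^{(\mu)}$ being $2^m$ by \eqref{hermout1}.

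The key step is the intertwining identity
\[
  Y_\nu\bigl(x^s g(x^r)\bigr)=x^s\,(D_{\nu_s}g)(x^r),\qquad s=0,\dots,r-1,
\]
valid for every polynomial $g$, with $\nu_s$ as in \eqref{num1}. By linearity it suffices to verify it for $g(y)=y^k$. Then $x^sg(x^r)=x^{rk+s}$ and, using $Y_\nu x^M=[M]_\nu x^{M-r}$, the \LHS\ equals $[rk+s]_\nu\,x^{r(k-1)+s}$. Since $0\le s\le r-1$ we have $[(rk+s)/r]=k$, so by \eqref{num} $[rk+s]_\nu=k+\vartheta_{rk+s}$; and comparing the two cases of \eqref{2} with \eqref{tt} shows $\vartheta_{rk+s}=\theta_k$ (the latter evaluated with $\nu_s$ in place of $\nu$): indeed $rk+s$ is of the first type in \eqref{2} when $k$ is even, and of the second type, carrying the parameter $\nu_s$, when $k$ is odd. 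On the other hand $D_{\nu_s}y^k=(k+\theta_k)y^{k-1}$ with the same $\theta_k$, so $x^s(D_{\nu_s}y^k)(x^r)=(k+\theta_k)\,x^{r(k-1)+s}$, and the two sides agree. This matching of the parity cases hidden in the projectors $\Pi_{r+s}(2r),\Pi_s(2r)$ with the two branches of $\theta_k$ is the only delicate point, and it is essentially forced by the definitions, so I do not expect a genuine obstacle.

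Granting these two facts, the proposition follows in one line: with $N=nr+s$,
\[
  Y_\nu H^{(r,\nu)}_N(x)=Y_\nu\bigl(x^sH_n^{(\nu_s)}(x^r)\bigr)=x^s\bigl(D_{\nu_s}H_n^{(\nu_s)}\bigr)(x^r)=2\bigl(n+\theta_n\bigr)\,x^sH_{n-1}^{(\nu_s)}(x^r),
\]
where $\theta_n$ is taken with parameter $\nu_s$. Here $x^sH_{n-1}^{(\nu_s)}(x^r)=H^{(r,\nu)}_{(n-1)r+s}(x)=H^{(r,\nu)}_{N-r}(x)$ by the factorization, while $n+\theta_n=[N/r]+\vartheta_N=[N]_\nu$ by \eqref{num}, \eqref{2} and \eqref{tt}; hence $Y_\nu H^{(r,\nu)}_N(x)=2[N]_\nu H^{(r,\nu)}_{N-r}(x)$, which is \eqref{rec2}. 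For $0\le N\le r-1$ one has $n=0$, $H_0^{(\nu_s)}=1$, $Y_\nu x^N=0$ and $[N]_\nu=0$, so the identity holds trivially with the convention $H^{(r,\nu)}_{N-r}=0$. Thus the whole argument rests on the intertwining identity, which is itself little more than careful indexing.
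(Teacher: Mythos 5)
Your argument is correct, but it follows a genuinely different route from the paper's. The paper proves \eqref{rec2} by direct computation: it splits $N$ into the two parity classes $N=2nr+s$ and $N=2nr+r+s$, differentiates the explicit Laguerre representation of $H^{(r,\nu)}_N$ obtained from \eqref{hermout1} and the factorization of Theorem 3.2, invokes the Laguerre identities \eqref{La1}--\eqref{La3} (and their combination \eqref{La4}), and then assembles the two resulting first-order relations through the projection form of $Y_\nu$ written with $\Pi_s(2r)$ and $\Pi_{r+s}(2r)$. You instead isolate the structural fact that on each symmetry component $x^s g(x^r)$ the operator $Y_\nu$ acts as the one-variable Yang--Dunkl operator $D_{\nu_s}$ in the variable $x^r$ --- verified on monomials via the paper's formula $Y_\nu x^M=[M]_\nu x^{M-r}$ --- and then quote the classical lowering relation $D_{\mu}H^{(\mu)}_m=2(m+\theta_m)H^{(\mu)}_{m-1}$ (which is exactly the ladder action recorded in Section 2) together with the factorization $H^{(r,\nu)}_{nr+s}(x)=x^sH^{(\nu_s)}_n(x^r)$. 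Your matching of $\vartheta_{rk+s}$ with $\theta_k$ at parameter $\nu_s$ is exactly the right bookkeeping, the degenerate case $N<r$ is handled, and the hypothesis $\nu>\tfrac{r-1}{2}$ gives $\nu_s>0>-\tfrac12$, so the classical relation indeed applies. What your route buys is modularity: the case analysis and all Laguerre calculus are absorbed into one intertwining identity plus a known one-dimensional fact, which also makes transparent why the eigenvalue is $2[N]_\nu$; what the paper's route buys is self-containedness, since it effectively re-derives the one-dimensional Dunkl lowering relation in the radial-lines variables rather than citing it, at the cost of the longer two-case computation.
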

\begin{proof}We need the following relations for Laguerre polynomials \cite{18}
\begin{align}
& x\frac{dL_n^{\alpha}(x)}{dx}=nL_n^{\alpha}(x)-(n+\alpha)L_{n-1}^{\alpha}(x)\label{La1}\\&
L_{n}^{\alpha-1}(x)=L_{n}^{\alpha}(x)-L_{n-1}^{\alpha}(x)\label{La2}\\&
\frac{dL_n^{(\al)}(x)}{dx}=-L_{n-1}^{(\al+1)}(x)\label{La3}.
\end{align}
\\If $N=2nr+s$ with $0\leq s\leq r-1$, then by \eqref{La3} we have
\begin{align*}
\frac{dH^{(r,\nu)}_N(x)}{dx}&=(-1)^n2^{2n} n!
sx^{s-1}L_n^{(\nu_s-1/2)}(x^{2r})+
(-1)^{n-1}2^{2n+1} n!rx^{2r+s-1}L_{n-1}^{(\nu_s+1/2)}(x^{2r})\\&=\frac{s}{x}H^{(r,\nu)}_N(x)+4nrx^{r-1}
H^{(r,\nu)}_{N-r}(x).
\end{align*}
Hence
\begin{align}
\frac{dH^{(r,\nu)}_N(x)}{dx^r}-\frac{s}{rx^r}H^{(r,\nu)}_N(x)=4nH^{(r,\nu)}_{N-r}(x).
\end{align}
 Combine  \eqref{La1} and \eqref{La2} to get
\begin{align}
& x\frac{dL_n^{\alpha}(x)}{dx}=(n+\alpha)L_n^{\alpha-1}(x)-\alpha L_{n}^{\alpha}(x).\label{La4}
\end{align} If $N=2nr+r+s$ with $0\leq s\leq r-1$, then from \eqref{hermout1} and \eqref{La4} we can write
\begin{align*}
\frac{dH^{(r,\nu)}_{N}(x)}{dx}&=(-1)^n2^{2n+1} n!(x^{r+s}L_n^{(\nu_s+1/2)}(x^{2r}))'\\&=(-1)^n2^{2n+1} n!(r+s)x^{r+s-1}L_n^{(\nu_s+1/2)}(x^{2r})\\&+(-1)^n2^{2n+2} n!
rx^{3r+s-1}\frac{dL_n^{(\nu_s+1/2)}(x^{2r})}{dx}\\&=\frac{r+s-2r(\nu_s+1/2)}{x}H^{(\nu)}_N
(x)+2r(2
n+2\nu_s+1)
x^{r-1}H^{(\nu)}_{N-r}(x).
\end{align*}
Hence
\begin{align}
\frac{dH^{(\nu)}_{N}(x)}{dx^r}+\frac{2\nu+1+s-r}{rx^r}H^{(\nu)}_N(x)=2(2n+2\nu_s+1)
H^{(\nu)}_{N-r}(x).
\end{align}
The result follows from \eqref{num} and the following form for the operator $Y_\nu$
\begin{align*}
Y_\nu=&\sum_{s=0}^{r-1}\big(\frac{d}{dx^r}+\frac{1}{rx^r}(2\nu+1+s-r)\big)\Pi_{r+s}(2r)\\&+
\sum_{s=0}^{r-1}\big(\frac{d}{dx^r}-\frac{s}{rx^r}\big)
\Pi_s(2r).
\end{align*}
\end{proof}

\begin{proposition}The generalized Hermite polynomials satisfy the following differential-difference equation:
\begin{align}
Y^2_\nu H_N^{(r,\nu)}(x)+2 x^rY_\nu \,H_N^{(r,\nu)}(x)=2[N]_\nu H_{N}^{(r,\nu)}(x).
\end{align}
\end{proposition}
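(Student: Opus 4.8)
The plan is to derive this from two facts already in hand: the lowering relation \eqref{rec2}, which I read as $Y_\nu H_N^{(r,\nu)}(x)=2[N]_\nu H_{N-r}^{(r,\nu)}(x)$, and the three-term recurrence \eqref{rec1}. Using the identification $[N/r]+\vartheta_N=[N]_\nu$ from \eqref{num}, I first rewrite \eqref{rec1} and then shift the index by $N\mapsto N-r$ to put it in the form
\[
2x^r H_{N-r}^{(r,\nu)}(x)=H_N^{(r,\nu)}(x)-2[N-r]_\nu H_{N-2r}^{(r,\nu)}(x).
\]

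Then I would compute the two terms on the left-hand side of the claimed identity separately. Applying \eqref{rec2} twice gives
\[
Y_\nu^2 H_N^{(r,\nu)}(x)=2[N]_\nu\,Y_\nu H_{N-r}^{(r,\nu)}(x)=4[N]_\nu[N-r]_\nu\,H_{N-2r}^{(r,\nu)}(x),
\]
and applying \eqref{rec2} once followed by the shifted recurrence gives
\[
2x^r Y_\nu H_N^{(r,\nu)}(x)=4[N]_\nu\,x^r H_{N-r}^{(r,\nu)}(x)=2[N]_\nu H_N^{(r,\nu)}(x)-4[N]_\nu[N-r]_\nu\,H_{N-2r}^{(r,\nu)}(x).
\]
Adding these two displays, the contributions proportional to $H_{N-2r}^{(r,\nu)}(x)$ cancel and what survives is exactly $2[N]_\nu H_N^{(r,\nu)}(x)$, which is the asserted equation.

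The only point that needs care is the low-index range $0\le N\le 2r-1$, where $H_{N-r}^{(r,\nu)}$ or $H_{N-2r}^{(r,\nu)}$ would acquire a negative subscript and where \eqref{rec1} was only stated for $N\ge r$. I would handle this by adopting the convention $H_M^{(r,\nu)}\equiv 0$ for $M<0$ (consistent with the initial value $H_{-1}^{(\nu)}=0$ in the recurrence for the generalized Hermite polynomials) and by noting that $[N]_\nu=0$ for $0\le N\le r-1$, so that every term carrying a negative subscript is multiplied by a vanishing deformed number: for $0\le N\le r-1$ both sides are zero since $Y_\nu x^N=0$, and for $r\le N\le 2r-1$ the shifted recurrence degenerates to the correct relation $2x^r H_{N-r}^{(r,\nu)}=H_N^{(r,\nu)}$. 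I do not expect a genuine obstacle here; the argument is essentially a two-line manipulation of \eqref{rec1} and \eqref{rec2}, and the only thing to keep straight is the uniform use of the abbreviation $[N]_\nu=[N/r]+\vartheta_N$ in both identities.
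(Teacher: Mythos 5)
Your argument is correct and rests on exactly the same two ingredients as the paper's own proof, namely the lowering relation \eqref{rec2} and the recurrence \eqref{rec1}; the paper merely packages them as the raising relation \eqref{rec4} and then substitutes \eqref{rec2}, which is equivalent to your index-shifted computation. Your explicit handling of the low-index range $0\le N\le 2r-1$ is a small extra care the paper omits, but the route is essentially identical.
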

\begin{proof}
From the recurrence relation \eqref{rec1} and \eqref{rec2}, we have
\begin{align*}
2x^r\,H_N^{(r,\nu)}(x)&=H_{N+r}^{(r,\nu)}(x)-\big([N/r]+
\vartheta_N\big)H_{N-r}^{(r,\nu)}(x)\\&=H_{N+r}^{(r,\nu)}(x)-Y_\nu H_N^{(r,\nu)}(x) .
\end{align*}
This yields
\begin{align}
Y_\nu H_N^{(r,\nu)}(x)+2x^r\,H_N^{(r,\nu)}(x)=H_{N+r}^{(r,\nu)}(x).\label{rec4}
\end{align}
We apply the operator $Y_\nu$ to the two members of \eqref{rec4} and we use \eqref{rec2}, to obtain
\begin{align*}
Y^2_\nu H_N^{(r,\nu)}(x)+2  x^rY_\nu\,H_N^{(r,\nu)}(x)=2[N]_\nu H_{N}^{(r,\nu)}(x).
\end{align*}
\end{proof}
We introduce the parabosonic creation and annihilation operators
\begin{align}
a=\frac{1}{\sqrt{2}}\big(Y_\nu+x^r\big),\quad a^\dagger =\frac{1}{\sqrt{2}}\big(-Y_\nu+x^r\big).
\end{align}
These operators have the commutation relations
\begin{align}
&[a,a^\dagger]=1+\frac{1}{r}\sum_{s=0}^{r-1}(2\nu+2s+1-r)\big(\Pi_s(2r)-\Pi_{r+s}(2r)\big)\\&
a\Pi_s(2r)=\Pi_{r+s}(2r)a,\quad \Pi_s(2r)a^\dagger=a^\dagger\Pi_{r+s}(2r).
\end{align}
The Hamiltonian $H_0$ assumes the form
\begin{equation}
H_0=-\frac{1}{2}Y_\nu^2+\frac{1}{2}\,x^r.
\end{equation}
Define the Hermite functions on the radial lines by
\begin{equation}
h_N^{(r,\nu)}(x)=\gamma_N^{-1/2}e^{-\frac{x^{2r}}{2}}H_N^{(r,\nu)}(x),
\end{equation}
where $$\gamma_N=\frac{2^{[N/r]}[N]_\nu!}{\zeta_N}.$$
An easy computation using Propositions 3.2 and 3.3 leads to the following results:
\begin{align}
&a h_N^{(r,\nu)}(x)=\sqrt{[N]_\nu}h_{N-r}^{(r,\nu)}(x),\\&a^\dagger h_N^{(r,\nu)}(x)=\sqrt{[N+r]_\nu}h_{N+r}^{(r,\nu)}(x).
\end{align}
It immediately follows that
\begin{align}
&H h_N^{(r,\nu)}(x)=([N/r]+\frac{1}{2}\nu_s)h_{N}^{(r,\nu)}(x).
\end{align}\\ Proceeding similarly as \cite{Bouz}, we can decompose every function $f:\delta\rightarrow \mathbb{C}$ uniquely in the form
$$f=\sum^{r-1}_{j=0}f_j,\quad f_j=\Pi_j(f).$$
It is clearly that the function $f_j$ satisfies  $f_j(\omega_r^{2}x)=\omega_r^jf_j(x)$. Then it can be identified with a function defined on the real line. Thus facts enables us to extend  the Schwartz space $\mathcal{S}(\mathbb{R})$ to the spaces $\mathcal{S}(\delta)$ of functions defined on the radial lines $\delta$. It is easily  seen that the Dunkl operator $Y_\nu $ map the space $\mathcal{S}(\delta)$ into itself.\\ Let consider the inner product in radials lines
\begin{align}  \label{p1}
\left\langle f,g\right\rangle _{\nu}=
\sum_{j=0}^{r-1}\int_{\mathbb{R}}f(\omega_r^jx)\overline{g(\omega_r^jx)} |x|^{2\nu}dx, \quad \omega_r=e^{\frac{2i\pi}{r}}.
\end{align}
\indent
We denote by $L_{\nu}^2(\delta)$  the space of measurable functions $f$ on $\delta$ satisfying
\begin{equation}
\sum_{j=0}^{r-1}\int_{\mathbb{R}}|f(\omega_r^jx)|^2
|x|^{2\nu}dx<\infty.
\end{equation}
As a direct consequence we notice that the projections $\Pi_j,$ $j=0,\,\dots,\,2r-1,$ are self-adjoint that is
\begin{equation}
\left\langle \Pi_jf,g\right\rangle _{\nu}=\left\langle f,\Pi_jg\right\rangle _{\nu}.
\end{equation}
One can also verify that the multiplication operator by $x^r$ is also self-adjoint
\begin{equation*}
\left\langle x^rf,g\right\rangle _{\nu}=\left\langle f,x^rg\right\rangle _{\nu}.
\end{equation*}
\begin{lemma}
Let $f,g\in \mathcal{S}(\delta)$. Then
\begin{equation*}
\left\langle Y_\nu f,g\right\rangle _{\nu} =-\left\langle f, Y_\nu g\right\rangle _{\nu}.
\end{equation*}
\end{lemma}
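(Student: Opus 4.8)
The plan is to reduce the identity to a one–dimensional integration by parts on $\RR$. First I would decompose $f$ and $g$ into their sector components $\Pi_s(2r)f,\ \Pi_{r+s}(2r)f$ (and similarly for $g$), $s=0,\dots,r-1$, and use the two facts recorded just before the statement: since the $\Pi_j(2r)$ are self-adjoint for $\lan\cdot,\cdot\ran_\nu$ and satisfy $\Pi_i(2r)\Pi_j(2r)=\de_{ij}\Pi_i(2r)$, one gets $\lan\Pi_a(2r)f,\Pi_b(2r)g\ran_\nu=0$ for $a\neq b$; and from the displayed action of $Y_\nu$ on monomials, $Y_\nu$ interchanges the $\Pi_s(2r)$-component with the $\Pi_{r+s}(2r)$-component. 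Consequently the whole statement collapses to the single identity
\[
\lan Y_\nu\al,\be\ran_\nu+\lan\al,Y_\nu\be\ran_\nu=0
\]
for $\al$ and $\be$ lying in the two partner components $\Pi_s(2r)\mathcal{S}(\delta)$ and $\Pi_{r+s}(2r)\mathcal{S}(\delta)$. On such $\al,\be$ only one projection in $Y_\nu$ acts nontrivially, so there $Y_\nu$ acts as the genuine differential operator $\frac{1}{rx^{r-1}}\frac{d}{dx}-\frac{s}{rx^{r}}$ on the sector-$s$ part and $\frac{1}{rx^{r-1}}\frac{d}{dx}+\frac{2\nu+1+s-r}{rx^{r}}$ on the sector-$(r+s)$ part.

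Next I would compute. Parametrising the line $\de_j$ by $x=\omega_r^{\,j}t$, $t\in\RR$, and using $\omega_r^{\,jr}=1$, both the first-order and the $x^{-r}$ parts of $Y_\nu$ reduce on $\de_j$ to a $j$-independent operator in $t$, while the phase $\omega_r^{\,js}$ carried by each of $\al,\be$ cancels against its conjugate; so the sum over $j$ merely produces a factor $r$ and
\[
\lan Y_\nu\al,\be\ran_\nu+\lan\al,Y_\nu\be\ran_\nu
 =\int_{\RR}\Bigl[t^{1-r}\,\tfrac{d}{dt}\bigl(\al\overline{\be}\bigr)
 +(2\nu+1-r)\,t^{-r}\,\al\overline{\be}\Bigr]|t|^{2\nu}\,dt .
\]
The point is that the two first-order contributions have merged into a total derivative and the coefficients $-s$ and $2\nu+1+s-r$ of $t^{-r}$ have added up to $2\nu+1-r$, independently of $s$. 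Writing $h=\al\overline{\be}$, integrating the first term by parts, and using the elementary identity $\tfrac{d}{dt}\bigl(t^{1-r}|t|^{2\nu}\bigr)=(2\nu+1-r)\,t^{-r}|t|^{2\nu}$, the right-hand side reduces to the boundary term $\bigl[\,h(t)\,t^{1-r}|t|^{2\nu}\,\bigr]_{-\iy}^{\iy}$.

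Finally I would show this boundary term is zero. At $\pm\iy$ this is immediate from the rapid decay of $h$ (a product of functions in $\mathcal{S}(\delta)$ restricted to $\RR$) against the polynomial factor $|t|^{2\nu+1-r}$. At $t=0$, the factor among $\al,\be$ lying in the $\Pi_{r+s}(2r)$-component vanishes to order at least $r$ (its monomial content being $x^{r+s},x^{3r+s},\dots$) while the other factor is continuous, so $h(t)\,t^{1-r}|t|^{2\nu}=O\!\bigl(|t|^{1+2\nu}\bigr)\to 0$; the hypothesis $\nu>\tfrac{r-1}{2}$ (indeed $\nu>-\tfrac12$ alone) is more than enough. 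The same estimates make the two integrals above absolutely convergent near $0$, so the integration by parts is legitimate. This proves the reduced identity, and summing over $s$ gives the lemma.

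I expect the only real subtlety to be the behaviour at the origin. Each of the two building blocks $\frac{1}{rx^{r-1}}\frac{d}{dx}$ and $\frac{c}{rx^{r}}$ of $Y_\nu$ is singular at $x=0$, and it is only the particular combination appearing in $Y_\nu$ — equivalently the relation $Y_\nu x^{s}=0$, or the sector-wise form of $Y_\nu$ used above — that is regular on $\mathcal{S}(\delta)$. So one must be careful to integrate by parts on this regular combination rather than on the separate singular pieces, and to check that the endpoint $t=0$ contributes nothing; this last point is exactly what the order of vanishing forced by the sector, together with $\nu>\tfrac{r-1}{2}$, delivers.
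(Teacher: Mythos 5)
Your argument is correct under the paper's standing assumptions and uses the same two ingredients as the paper's proof --- integration by parts on each line against the weight $|x|^{2\nu}$, and the fact that multiplication by $x^{-r}$ together with the projections swaps the partner sectors $\Pi_s(2r)\leftrightarrow\Pi_{r+s}(2r)$ --- but it organizes them differently. The paper keeps $f,g$ whole and splits the \emph{operator} into the derivative piece and the two projection pieces, integrates by parts only the derivative piece, and then recombines using $\langle \tfrac{1}{rx^r}\Pi_{s}f,g\rangle_{\nu}=\langle f,\tfrac{1}{rx^r}\Pi_{r+s}g\rangle_{\nu}$ to recognize $-\langle f,Y_\nu g\rangle_\nu$. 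You instead split the \emph{functions} into sectors first, so that on each partner pair $Y_\nu$ is a genuine scalar first-order operator, and the whole identity collapses to the exact-derivative relation $\tfrac{d}{dt}\bigl(t^{1-r}|t|^{2\nu}\bigr)=(2\nu+1-r)\,t^{-r}|t|^{2\nu}$ plus a boundary term. What your arrangement buys is transparency about the singular coefficients (the $-s$ and $2\nu+1+s-r$ visibly add to the $s$-independent $2\nu+1-r$) and an explicit treatment of the endpoint $t=0$, which the paper's proof passes over in silence (it only records boundary terms at $\pm\infty$). The phase bookkeeping ($\alpha$ contributes $\omega_r^{js}$, $\overline{\beta}$ contributes $\omega_r^{-j(r+s)}=\omega_r^{-js}$, and the operator coefficients become $j$-independent in the line parameter) is right; the factor $r$ from the $j$-sum is dropped in your display but is harmless.

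One justification should be repaired, though it does not affect the result. The claim that the $\Pi_{r+s}(2r)$-component of a general element of $\mathcal{S}(\delta)$ vanishes to order at least $r$ at the origin is a polynomial/analytic heuristic: equivariance under rotation by $\varepsilon_r$ only forces the value at $0$ to vanish and a definite parity along each line, not order-$r$ vanishing (for $r=3$, $s=0$, the equivariant extension of $t\,e^{-t^{2}}$ lies in the sector $\Pi_{3}(6)$ and vanishes only to first order). Consequently the parenthetical assertion that $\nu>-\tfrac12$ alone would suffice is not established by your argument. Under the paper's standing hypothesis $\nu>\tfrac{r-1}{2}$ none of this is needed: boundedness of $h=\alpha\overline{\beta}$ near $0$ already gives the boundary term $O\bigl(|t|^{2\nu+1-r}\bigr)\to 0$, and $|t|^{2\nu-r}$ is locally integrable since $2\nu-r>-1$, so both integrals converge absolutely and the integration by parts (done on $(-\infty,0)$ and $(0,\infty)$ and letting the inner endpoints tend to $0$) is legitimate. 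With that substitution your proof is complete.
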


\begin{proof}
We have
\begin{eqnarray*}
\left\langle Y_\nu,g\right\rangle _{\nu}
&=&\sum_{j=0}^{r-1}\int_{\mathbb{R}}f'(\omega_r^jx)
\overline{g(\omega_r^jx)%
}|x|^{2\nu-r+1}dx \\
&+&\sum_{s=0}^{r-1}(2\nu+1+s-r)\left\langle \frac{1}{rx^r}\Pi_{r+s}f,g\right\rangle _{\nu}-\sum_{s=0}^{r-1}s
\left\langle \frac{1}{rx^r}\Pi_{s}f,g\right\rangle _{\nu} \\
&=&(I)+(II)-(III)\\
\end{eqnarray*}
Performing integration by parts, the first term $(I)$ becomes
\begin{eqnarray*}
(I)&=&\frac{1}{r}\sum_{j=0}^{r-1}\omega_{r}^{-j} \left\{ \lim_{x\rightarrow
\infty }\left[ f(x)\overline{g(x)}|x|^{2\nu-r+1}\right] -\lim_{x\rightarrow -\infty}\left[
f(x)\overline{g(x)}|x|^{2\nu-r+1}\right] \right\} \\
&&-\frac{1}{r}\sum_{j=0}^{r-1}\int_{\mathbb{R}}f(\omega_{r}^{j}x)\overline{( g'%
+\frac{\alpha}{x}g)(\omega_{r}^{j}x)}|x|^{2\nu-r+1}dx \\&=&-\frac{1}{r}\sum_{j=0}^{r-1}\int_{\mathbb{R}}f(\omega_{r}^{j}x)\overline{( g'%
+\frac{2\nu-r+1}{x}g)(\omega_{r}^{j}x)}|x|^{2\nu-r+1}dx\\&=&-\left\langle f,\frac{d g}{dx^r}+\frac{2\nu-r+1}{rx^r}g\right\rangle _\nu.
\end{eqnarray*}%
For the second and the third terms we use the fact that $$\left\langle \frac{1}{rx^r}\Pi_{s}f,g\right\rangle _{\nu}=\left\langle f,\frac{1}{rx^r}\Pi_{r+s}g\right\rangle _{\nu}.$$ This yields
\begin{eqnarray*}
(II) &=&\sum_{s=0}^{r-1}(2\nu+1+s-r)\left\langle f,\frac{1}{rx^r}\Pi_{s}g\right\rangle _{\nu}\\
(III) &=&\sum_{s=0}^{r-1}s
\left\langle f, \frac{1}{rx^r}\Pi_{r+s}g\right\rangle _{\nu}
.
\end{eqnarray*}%
Combining these equations we get
\begin{equation*}
\left\langle Y_\nu,g\right\rangle _{\nu}=-\left\langle Y_\nu,g\right\rangle _{\nu}.
\end{equation*}
\end{proof}

Thus from \eqref{e9}, the system $\{h_{N}^{(r,\nu)}(x)\}$ is an orthonormal set in $L^2_\nu(\delta)$ and it is complete by same argument used to prove that the classical Hermite functions
form a complete orthogonal set in $L^2(\mathbb{R},dx)$ see \cite{Ah}.
\begin{theorem}The operator $H$ with domain $D(H)=\mathcal{S}(\delta)$
is essentially self-adjoint; the spectra of its closure
is discrete and given by $$\frac{1}{2}\big([N]_\nu+[N+r]_\nu\big), N=0,\,1,\,2,\dots\,\,.$$
\end{theorem}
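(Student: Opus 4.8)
The plan is to reduce the statement to the standard fact that a densely defined symmetric operator which possesses a complete orthonormal system of eigenvectors inside its domain is essentially self-adjoint, with the spectrum of its closure equal to the closure of the set of eigenvalues. The two things to be supplied are thus the symmetry of $H$ on $\mathcal{S}(\delta)$ and the fact — recorded just above — that the functions $h_N^{(r,\nu)}$ form a complete orthonormal basis of $L^2_\nu(\delta)$, lie in $\mathcal{S}(\delta)=D(H)$, and are eigenvectors of $H$.

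First I would write $H$ in the manifestly symmetric factorized form $H=\tfrac12\big(aa^\dagger+a^\dagger a\big)$, with $a=\tfrac1{\sqrt2}(Y_\nu+x^r)$ and $a^\dagger=\tfrac1{\sqrt2}(-Y_\nu+x^r)$. On $\mathcal{S}(\delta)$ the operator $Y_\nu$ maps the space into itself and is skew-symmetric by the Lemma, while multiplication by $x^r$ maps $\mathcal{S}(\delta)$ into itself and is symmetric for $\langle\cdot,\cdot\rangle_\nu$ (because $(\omega_r^jx)^r=x^r$ for all $j$, the weight-independent factor in the inner product is real). Hence $a^\dagger$ is the formal adjoint of $a$ on $\mathcal{S}(\delta)$, both $a$ and $a^\dagger$ leave $\mathcal{S}(\delta)$ invariant, and a two-line computation gives $\langle Hf,g\rangle_\nu=\langle f,Hg\rangle_\nu$ for all $f,g\in\mathcal{S}(\delta)$; so $H$ is a densely defined symmetric operator with $D(H)=\mathcal{S}(\delta)$.

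Next I would read the spectrum off the basis. From the ladder relations $a\,h_N^{(r,\nu)}=\sqrt{[N]_\nu}\,h_{N-r}^{(r,\nu)}$ and $a^\dagger h_N^{(r,\nu)}=\sqrt{[N+r]_\nu}\,h_{N+r}^{(r,\nu)}$ one obtains $a^\dagger a\,h_N^{(r,\nu)}=[N]_\nu h_N^{(r,\nu)}$ and $aa^\dagger h_N^{(r,\nu)}=[N+r]_\nu h_N^{(r,\nu)}$, hence $Hh_N^{(r,\nu)}=\lambda_N h_N^{(r,\nu)}$ with $\lambda_N=\tfrac12\big([N]_\nu+[N+r]_\nu\big)$. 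Inserting \eqref{num} one checks $\lambda_N=2n+\tfrac12+\nu_s$ when $N=2nr+s$ and $\lambda_N=2n+\tfrac32+\nu_s$ when $N=2nr+r+s$ (with $0\le s\le r-1$); under the hypothesis $\nu>\tfrac{r-1}{2}$ one has $\nu_s>0$, so every $\lambda_N$ is real and positive, $\lambda_N\to+\infty$, and each value is taken by only finitely many indices $N$. Essential self-adjointness now follows from the basic criterion that it suffices to check $\ker(H^*-i)=\ker(H^*+i)=\{0\}$: if $H^*\phi=\pm i\phi$, then for every $N$, $\pm i\,\langle\phi,h_N^{(r,\nu)}\rangle_\nu=\langle H^*\phi,h_N^{(r,\nu)}\rangle_\nu=\langle\phi,Hh_N^{(r,\nu)}\rangle_\nu=\lambda_N\langle\phi,h_N^{(r,\nu)}\rangle_\nu$, and the reality of $\lambda_N$ forces $\langle\phi,h_N^{(r,\nu)}\rangle_\nu=0$ for all $N$, whence $\phi=0$ by completeness. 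Therefore $\overline{H}$ is self-adjoint, is diagonalized by $\{h_N^{(r,\nu)}\}$ with eigenvalues $\lambda_N$, and since $\lambda_N\to+\infty$ the set $\{\lambda_N:N\ge0\}$ is closed with every point of finite multiplicity; so $\sigma(\overline{H})=\{\tfrac12([N]_\nu+[N+r]_\nu):N=0,1,2,\dots\}$ is purely discrete, as asserted.

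The functional-analytic part above is routine once its two hypotheses are in hand, so the real work lies there. The delicate point is the symmetry of $H$: it depends on the skew-symmetry of $Y_\nu$ furnished by the Lemma, whose integration-by-parts step must not contribute a boundary term at the common point $0$ of the radial lines — the assumption $\nu>\tfrac{r-1}{2}$, which makes the weight $|x|^{2\nu-r+1}$ vanish at the origin, is exactly what excludes such a term, and this is the place where I would verify the details most carefully. The second input, completeness of $\{h_N^{(r,\nu)}\}$ in $L^2_\nu(\delta)$, would be obtained by carrying the classical density argument for Hermite functions (cf. \cite{Ah}) over to $\delta$ via the decomposition $f=\sum_{j}\Pi_j f$, each $\Pi_j f$ being identified with a function on the real line.
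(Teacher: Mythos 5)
Your proposal is correct and follows exactly the route the paper intends (and leaves largely implicit): symmetry of $H$ on $\mathcal{S}(\delta)$ via the Lemma on skew-symmetry of $Y_\nu$ together with symmetry of multiplication by $x^r$, the eigenvalue computation $H h_N^{(r,\nu)}=\tfrac12([N]_\nu+[N+r]_\nu)h_N^{(r,\nu)}$ from the ladder relations, and essential self-adjointness plus discreteness of the spectrum from the completeness of the orthonormal system $\{h_N^{(r,\nu)}\}$ in $L^2_\nu(\delta)$. Your write-up in fact supplies details (deficiency-index argument, positivity of $\nu_s$, absence of finite accumulation points) that the paper omits, so it is a faithful and more complete rendering of the same argument.
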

Now we will give a supersymmetric oscillator. To this end we change $x$ by $x^r$ and the standard derivative $ \frac{d}{dx}$ by the Dunkl operator $Y_\nu$ in the expression of the supercharge defined in \eqref{char1}, to get the new supercharge
\begin{equation}
Q=\frac{1}{\sqrt{2}}\big(Y_\nu R_r+x^r\big)
\end{equation}
where $R_r$ is the reflection given 
\begin{equation}
R_r=\sum_{s=0}^{r-1}(\Pi_s-\Pi_{r+s}).
\end{equation} 
A straightforward computation shows that 
 \begin{align}Y_\nu R_r=-R_rY_\nu, \quad x^rR_r=-R_rx^r, \quad R^2=1.
\end{align}
After evaluating $Q^2$, we get the following form for a supersymmetric Hamiltonian
\begin{equation}
H=Q^2=H_0-\frac{1}{2}[Y_\nu,x^r]R_r.\label{spe}
\end{equation}
The spectrum of $H$ is easily obtained by using \eqref{spe}, we have
\begin{equation}
Hh_{N}^{(r,\nu)}(x)=[N/r]h_{N}^{(r,\nu)}(x).
\end{equation}
The spectrum of $H$  consist only for the number $[N/r]$ starting with zero. Each level is degenerate.

\end{document}